\author{Andy Hammerlindl}
\title{Center bunching without dynamical coherence}
\newcommand{\Aut}{\operatorname{Aut}}
\newcommand{\heis}{\mathfrak{h}}
\newcommand{\lieg}{\mathfrak{g}}
\newcommand{\bbR}{\mathbb{R}}
\newcommand{\bbC}{\mathbb{C}}
\newcommand{\bbQ}{\mathbb{Q}}
\newcommand{\bbZ}{\mathbb{Z}}
\newcommand{\Es}{E^s}
\newcommand{\Ec}{E^c}
\newcommand{\Eu}{E^u}
\newcommand{\Ecu}{E^{cu}}
\newcommand{\Wcu}{W^{cu}}
\newcommand{\Wcs}{W^{cs}}
\newcommand{\inv}{^{-1}}
\newcommand{\ep}{\epsilon}
\newcommand{\lam}{\lambda}
\newcommand{\gam}{\gamma}
\newcommand{\sig}{\sigma}
\newcommand{\qandq}{\quad \text{and} \quad}
\newcommand{\Diff}{\operatorname{Diff}}
\newcommand{\id}{\operatorname{id}}
\newcommand{\TqM}{T_q M}
\newenvironment{mattwo}
{
\left (
\begin{array}{cc}
}
{
\end{array}
\right )
}
\newenvironment{matthree}
{
\left (
\begin{array}{ccc}
}
{
\end{array}
\right )
}
\newtheorem{thm}{Theorem}
\newtheorem{lemma}[thm]{Lemma}
\theoremstyle{remark}
\newtheorem*{remark} {\bf Remark}
\providecommand{\acknowledgement}{{\noindent\sc Acknowledgements.}\quad}
\begin{document}

\maketitle

\begin{abstract}
    We answer a question of Burns and Wilkinson,
    showing that there are open families of
    volume-preserving partially hyperbolic
    diffeomorphisms
    which are accessible and center bunched
    and neither dynamically coherent nor Anosov.
    We also show in the volume-preserving setting that any diffeomorphism
    which is partially hyperbolic and Anosov may be isotoped
    to a diffeomorphism which is partially hyperbolic and not Anosov.
\end{abstract}


Many partially hyperbolic dynamical systems are ergodic.
One of the most general results in this direction is the following theorem
of K.~Burns and A.~Wilkinson \cite{BW-annals}.

\begin{quote}
    Any $C^2$ volume-preserving, accessible,
    center-bunched, partially hyperbolic
    system is ergodic.
\end{quote}
We define these terms briefly and refer the reader to
\cite{BW-annals,BW-DC} for further details.
A $C^1$ diffeomorphism $f$ on a compact Riemannian manifold $M$
is {\em partially hyperbolic}
if there is an integer $k \ge 1$, a non-trivial splitting of the tangent bundle
\[
    TM = \Es \oplus \Ec \oplus \Eu
      \]
invariant under the derivative $Df$,
and continuous positive functions $\nu,\hat \nu,\gam,\hat \gam$
such that \mbox{$\nu, \hat \nu < 1$} and
\[
        \|Tf^k v^s\| < \nu(x) < \gam(x) < \|Tf^k v^c\| < \hat \gam \inv(x) < \hat
        \nu \inv(x) < \|Tf^k v^u\|
\]
for all $x \in M$ and unit vectors $v^s \in \Es(x)$, $v^c \in \Ec(x)$ and $v^u \in \Eu(x)$.
We say that $f$ is {\em \mbox{center} bunched} if the functions can be chosen
so that $\max\{\nu, \hat \nu\} < \gam \hat \gam$.
Further, $f$ is {\mbox{\em accessible}} if for any two points $x,y \in M$ there
is a path from $x$ to $y$ which is a concatenation of $C^1$ subpaths, each
tangent either to $E^s$ or $E^u$.

The above theorem is a generalization of an earlier result appearing in a
unpublished preprint \cite{BW-better}.
In that preprint,
the partially hyperbolic system has an additional
assumption of \emph{dynamical coherence}
meaning that
there are invariant foliations $\Wcs$ and
$\Wcu$ tangent to the subbundles $\Es \oplus \Ec$ and $\Eu \oplus \Ec$.
To avoid making this assumption, a considerable portion of the proof
in \cite{BW-annals}
explains the definition and construction of ``fake foliations'' which fill the roles
of $\Wcs$ and $\Wcu$ in cases where true foliations do not exist.
This makes the exposition in \cite{BW-annals} much longer and more complicated
than the proof in \cite{BW-better}.

At the time, however, Burns and Wilkinson did not know if such extra effort
was necessary.
There were no known non-dynamically coherent examples which
could not be proven ergodic by simpler means.
This current paper gives such an example
showing
that the fake foliations used in \cite{BW-annals} are necessary to
show ergodicity.

\begin{thm}
    For\ \ $r  \ge  1$, there is an open family $\mathcal{U}$ 
    in the $C^1$ topology of\, $C^r$ volume-preserving diffeomorphisms
    such that each diffeomorphism in $\mathcal{U}$
    is partially hyper\-bolic, accessible, and center bunched
    and is neither dynamically coherent nor Anosov.
\end{thm}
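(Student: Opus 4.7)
The plan is to construct $\mathcal{U}$ via a localized, volume-preserving $C^r$ perturbation of an algebraic partially hyperbolic (but non-Anosov) model with one-dimensional center, chosen so that the perturbation breaks dynamical coherence while all other listed properties are either automatic or $C^1$-robust. A natural base is the Heisenberg nilmanifold $M = \Heis/\Gam$ equipped with the diffeomorphism $F_0$ induced by a hyperbolic $A \in SL(2,\bbZ)$: extending $A$ to a Lie algebra automorphism of $\heis$ via $[X,Y]=Z$ yields eigenvalues $\lam, \lam\inv, 1$ on the basis $(X, Y, Z)$ for some $\lam > 1$. The map $F_0$ is smooth, preserves Haar volume, and carries an invariant splitting $\Es \oplus \Ec \oplus \Eu$ with one-dimensional isometric center $\Ec$ tangent to $Z$; thus $F_0$ is partially hyperbolic, not Anosov, and dynamically coherent, with $\Wcs, \Wcu$ arising from the integrable subalgebras $\langle X, Z\rangle$ and $\langle Y, Z\rangle$.

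Several of the required properties hold for $F_0$ and are $C^1$-robust. Since $\dim \Ec = 1$, center bunching is automatic: one can choose $\gam(x)$ and $\hat\gam(x)\inv$ to bracket $\|TF_0^k v^c\|$ arbitrarily tightly, so that $\gam\hat\gam$ is as close to $1$ as desired, whereas $\max\{\nu, \hat\nu\} < 1$ strictly by partial hyperbolicity. Accessibility holds because $[\Es, \Eu] = \Ec$ bracket-generates $TM$ at the Lie-algebra level, so the accessibility class of any point is all of $M$; accessibility is $C^1$-open for center-bunched partially hyperbolic systems. Partial hyperbolicity, the non-Anosov condition (the center rate of $F$ remains near $1$), and volume preservation (within the appropriate category) are also stable under $C^1$-small perturbations. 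The only nontrivial task is to destroy dynamical coherence.

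To this end, I would pick a fixed point $p$ of $F_0$ and compose with a small volume-preserving $C^r$ diffeomorphism $\phi$ supported in a ball $B$ around $p$, in the spirit of the Rodriguez Hertz--Rodriguez Hertz--Ures ``derived from Anosov'' construction on $\bbT^3$. The perturbation $\phi$ is designed to introduce a fine-scale splitting of the local stable eigenvalue at $p$ and to produce new hyperbolic periodic points near $p$ whose topological indices are incompatible with the existence of a continuous $F$-invariant foliation tangent to $\Es \oplus \Ec$ through $p$. Volume preservation is enforced by taking $\phi$ as the time-one map of a divergence-free compactly supported $C^r$ vector field; its $C^1$-norm is kept small enough that $F := F_0 \circ \phi$ inherits partial hyperbolicity, accessibility, center bunching, and non-Anosov behavior from $F_0$.

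The main obstacle and core technical content is the verification that $F$ is genuinely not dynamically coherent and that this non-coherence is itself a $C^1$-open condition. The plan is to extract a local topological obstruction at $p$: using the new periodic orbits produced by $\phi$, exhibit two distinct center-type curves through $p$ whose forward orbits separate, so that no single continuous $F$-invariant foliation tangent to $\Es \oplus \Ec$ can contain both. Because this obstruction depends only on hyperbolic periodic data together with local stable and unstable manifolds, all of which vary continuously in the $C^1$ topology, the obstruction persists on a $C^1$-open neighborhood $\mathcal{U}$ of $F$ inside the space of $C^r$ volume-preserving diffeomorphisms, yielding the desired open family.
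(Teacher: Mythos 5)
Your proposal runs in the opposite direction from the paper — you start from a non-Anosov, dynamically coherent model (a skew-product over a hyperbolic toral automorphism on the three-dimensional Heisenberg nilmanifold) and try to destroy coherence by a local perturbation, whereas the paper starts from a carefully built nine-dimensional Anosov nilmanifold automorphism that is \emph{already} not dynamically coherent (the bundle $\Ec\oplus\Eu$ fails to be a Lie subalgebra) and deforms it to destroy the Anosov property instead. The direction matters, and yours has a fatal obstruction.

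The central step of your plan — perturbing $F_0$ on $\Heis/\Gam$ to break dynamical coherence — cannot work. Your $F_0$ and any $C^1$-small volume-preserving perturbation of it remain \emph{absolutely} partially hyperbolic (the rates bounding $\Es$, $\Ec$, $\Eu$ are constants in the algebraic model, and this is $C^1$-robust), and absolutely partially hyperbolic diffeomorphisms on the three-dimensional Heisenberg nilmanifold are \emph{always} dynamically coherent and leaf conjugate to the algebraic model. So there is no perturbation, however cleverly engineered, that introduces the topological obstruction you describe; the periodic orbits with "incompatible indices" simply do not arise in a way that contradicts the existing, robustly persisting $\Wcs$ and $\Wcu$. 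Moreover, the Rodriguez Hertz--Rodriguez Hertz--Ures non-coherent construction that you invoke for inspiration lives on $\bbT^3$, not on the Heisenberg nilmanifold, and it is built around a compact two-dimensional $su$-leaf — a feature that forces the example to be \emph{non-accessible}. Accessibility and that style of non-coherence are mutually exclusive, so even relocating to $\bbT^3$ would sacrifice one of the properties the theorem requires. Finally, you assert without justification that the failure of coherence, once achieved, would be $C^1$-open; the paper does establish this, but only because of a specific mechanism: for the paper's $f$, the bundle $\Ecu_f$ is nowhere integrable away from the fixed point, the ``moreover'' clause of the deformation lemma keeps nearby splittings uniformly close to $f$'s off a neighbourhood of $q$, and a coherent nearby map would force a sequence of $\Ecu$-tangent disks converging to an $\Ecu_f$-tangent disk, which does not exist. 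Your proposal contains no analogue of that argument.

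The correct route, as in the paper, is to work in higher dimension where a nilmanifold automorphism can simultaneously be Anosov, partially hyperbolic, center bunched, and not dynamically coherent (the failure of coherence is a Lie-algebraic fact: $[X_2,Y_2]=Z_2\notin\Ec\oplus\Eu$), and then to make a localized, volume-preserving "spinning" perturbation at a fixed point — with quantitative control on the resulting dominated splittings — that destroys hyperbolicity of that fixed point while preserving partial hyperbolicity, center bunching, and the nowhere-integrability of $\Ecu$; accessibility is then obtained by a further $C^1$-small perturbation via the $C^1$-density of stable accessibility.
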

\medskip

To show this, we first define a diffeomorphism $f$ which is Anosov,
partially hyperbolic, center bunched, and not dynamical coherent.
We then deform $f$ to produce a diffeomorphism $g$ which is not Anosov,
but satisfies the other three properties.
By \cite{dolgopyat2003}, there is a open set of diffeomorphisms $\mathcal{U}$
which are $C^1$ close to $g$ and which also
have these properties and are accessible.
This will therefore prove the result.

Define a hyperbolic $3 \times 3$ matrix $A$ with integer entries
such that the eigenvalues $\lam_i$
satisfy
\[
    0 < \lam_2^2 < \lam_1 < \lam_2 < 1 < \lam_3.
      \]
For example,
\[
    A =
        \begin{matthree}
        2 & -3 &  1\\
        -3 &  6 & -2\\
        1 & -2 &  1  \end{matthree}
    .
\]
As the characteristic polynomial of $A$ is irreducible over $\bbQ$,
the splitting field $F/\bbQ$ has a Galois group with an order three subgroup
$\{\id, \sig, \sig^2\}$ where
$\sig$ is a
cyclic permutation such that $\sig(\lam_1) = \lam_2$.

For $i=1,2,3$, let $\heis_i = \langle X_i, Y_i, Z_i \rangle$ be a copy of
the Heisenberg Lie algebra where $[X_i,Y_i] = Z_i$.
Define
\begin{itemize}
    \itemsep0.75em
    \item
    $\lieg = \heis_1 \times \heis_2 \times \heis_3$,
    \item
    $\tilde \Gamma_1 = \bbZ[\lam_1] \times \bbZ[\lam_1] \times \tfrac{1}{2}\bbZ[\lam_1]
    \subset \heis_1$,
    \item
    $\tilde \Gamma = \{ v \times \sig(v) \times \sig^2(v) :
    v \in \tilde \Gamma_1 \} \subset \lieg$,
    \item
    $B_i \in \Aut(\heis_i)$ by $B_i(X_i) = \lam_i X_i$
    and $B_i(Y_i) = \lam_i Y_i$ \\
    (which implies $B_i(Z_i) = \lam_i^2 Z_i$),
    \item
    $B = B_1 \times B_2 \times B_3 : \lieg \to \lieg$,
    \item
    $G = \exp(\lieg)$, and
    \item
    $\Gamma = \exp(\tilde \Gamma)$.
\end{itemize}
Then $B$ defines an Anosov diffeomorphism $f$ of the nilmanifold $G/\Gamma$.

Define a partially hyperbolic splitting for $f$ by
\[
    \Es = \langle Z_1, Z_2, Y_1, X_1 \rangle, \quad
    \Ec = \langle Y_2, X_2 \rangle, \qandq
    \Eu = \langle X_3, Y_3, Z_3 \rangle.
\]
The inequalities on eigenvalues were chosen so that neither this nor any other
partially hyperbolic splitting for $f$ is dynamically coherent.

We now set about deforming $f$ to make a non-Anosov example.
The techniques are similar to those used to construct
volume-preserving examples in \cite{bochi2002, bonatti2000,tahzibi-stably}.
Here we use the following lemma, proven in the appendix,
which will allow us to ensure that center bunching is preserved after the
deformation.

\begin{lemma} \label{spinning-lemma}
    Suppose $f \in \Diff^r(M)$ ($r  \ge  1$) has a dominated splitting,
    i.e.,
    a continuous invariant splitting
    $TM = E'_f \oplus E''_f$
    with continuous functions $\alpha,\beta:M \to \bbR$
    such that
    \[
        \|Df v'_x\| < \alpha(x) < \beta(x) < \|Df v''_x\|
    \]
    for all $x \in M$ and unit vectors
    $v'_x \in E'_f(x)$ and $v''_x \in E''_f(x)$.

    Let $q \in M$ be a fixed point
    and $P \subset \TqM$ a $Df_q$-invariant plane.
    For $\theta \in \bbR$ define
    $R_\theta : \TqM \to \TqM$
    as the rotation by angle\, $\theta$ in the plane $P$
    and
    suppose for some $a>0$ and all $\theta \in [0,a]$
    that
    the linear map $R_\theta \circ Df_q$ has no eigenvalues
    in
    $\{ z \in \bbC : \alpha(q)  \le  |z|  \le  \beta(q) \}.$

    Then,
    there is
    $g \in \Diff^r(M)$ isotopic to $f$
    such that $Dg_q = R_a \circ Df_q$
    and $g$ has a
    dominated splitting $TM = E'_g \oplus E''_g$
    which, for some $n  \ge  1$,
    satisfies
    \[
        \|Dg^n v'_x\| <
        \prod_{k=0}^{n-1} \alpha(g^k(x)) <
        \prod_{k=0}^{n-1} \beta(g^k(x)) < \|Dg^n v''_x\|
    \]
    for all $x \in M$ and unit vectors
    $v'_x \in E'_g(x)$ and $v''_x \in E''_g(x)$.
    If $f$ preserves a smooth volume form,
    one may choose $g$ to preserve the same volume form.

    Moreover, for any $\ep > 0$ and closed set $K \subset M$
    with $q \notin K$, one may define $g$ such that 
    $f|_K = g|_K$ and
    for all $x \in K$
    the splittings
    $E'_f(x) \oplus E''_f(x)$ and 
    $E'_g(x) \oplus E''_g(x)$ are $\ep$-close.
\end{lemma}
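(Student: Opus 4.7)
\emph{Proof plan.}
I would construct $g = h \circ f$ as a localized twist, with $h$ supported in a small neighborhood $U$ of $q$ and satisfying $Dh_q = R_a$.
In local coordinates near $q$ in which $P$ is a coordinate plane and (by Moser's theorem, in the volume-preserving case) the volume form is standard Lebesgue, take
\[
    h_t(x) = R_{t\rho(x)}(x)
\]
where $\rho$ is a smooth bump equal to $1$ at $q$, supported in $U$, and depending only on the norm of the $P$-component of $x$ together with the perpendicular coordinates.
This twist acts as a pure rotation on each level set of $\rho$, so it preserves Lebesgue measure; it is a $C^r$ isotopy from the identity; and $Dh_t(q) = R_t$ because $\nabla\rho(q) = 0$.
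Setting $g = h_a \circ f$ gives $Dg_q = R_a \circ Df_q$ exactly, and shrinking $U$ to avoid the closed set $K \cup f(K)$ (which misses $q$) yields $g|_K = f|_K$ together with the required $\ep$-closeness of splittings on $K$.

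The crux is producing the dominated splitting for $g$ with the same rates $\alpha, \beta$.
The hypothesis that $R_\theta \circ Df_q$ has no eigenvalue in $\{\alpha(q) \le |z| \le \beta(q)\}$ for any $\theta \in [0,a]$, together with continuity of the spectrum in $\theta$, keeps the ranks of the spectral projections onto the eigenspaces inside and outside this annulus constant along the path.
Hence $Dg_q$ admits an invariant splitting $\TqM = V'_q \oplus V''_q$ whose generalized eigenvalues on $V'_q$ all have modulus $< \alpha(q)$ and on $V''_q$ all have modulus $> \beta(q)$, and for $n$ large enough
\[
    \|(Dg_q)^n v'\| < \alpha(q)^n < \beta(q)^n < \|(Dg_q)^n v''\|
\]
holds for unit $v' \in V'_q$, $v'' \in V''_q$.
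By continuity this $n$-step estimate persists on some open neighborhood $V$ of $q$.

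I would then globalize via cone fields: on $M \setminus U$ take the thin cones of $E'_f \oplus E''_f$, which $Dg^n$ preserves because $g = f$ there; inside $V$ take thin cones around $V'_q$ and $V''_q$; and interpolate in the collar $V \setminus \overline{U}$ (shrinking $U$ so that $\overline{U} \subset V$).
Orbits whose $n$-th iterate remains inside $V$ satisfy the rate inequality by the spectral gap at $q$ together with continuity; orbits whose $n$-iterate avoids $U$ satisfy it directly from $f$; and any remaining orbit enters and leaves $U$ only a bounded number of times in a window of $n$ steps, so its cumulative distortion inside $U$ is a constant factor absorbed by enlarging $n$.
The splitting $E'_g \oplus E''_g$ is then extracted in the usual way as the intersection of forward and backward iterates of this invariant cone field, and it is $\ep$-close to $E'_f \oplus E''_f$ outside $U$.
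The principal difficulty is that the twist is not $C^1$-small --- rotating by a fixed angle $a$ inside an arbitrarily small $U$ forces $\|Dh\|$ to be of order unity across $U$, so persistence of the one-step splitting is unavailable --- and the eigenvalue-avoiding hypothesis supplies exactly the spectral gap at $q$ that compensates, which is why the conclusion must be phrased as an $n$-step inequality.
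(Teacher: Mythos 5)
Your high-level strategy---a localized twist $g = h \circ f$ built in Moser coordinates, combined with $n$-step cone fields and the spectral hypothesis on $R_\theta \circ Df_q$---is the same as the paper's, and you correctly flag the central obstruction: the twist cannot be made $C^1$-small. But the proposal does not actually overcome that obstruction, and two concrete gaps remain.

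First, the radial profile of the bump is not arbitrary. Writing $h(x) = R_{\rho(x)}(x)$, the derivative is $Dh_x = R_{\rho(x)} + (\text{a correction of size} \sim \|x\|\cdot\|\nabla\rho(x)\|)$. For a standard bump supported in a ball of radius $r_0$ one has $\|\nabla\rho\| \sim 1/r_0$, so on the annulus $\|x\| \sim r_0$ this correction is of order $1$; there $Dh_x$ is not close to \emph{any} rotation, and the cone analysis does not engage the hypothesis, since the derivatives of $g$ are not well-approximated by maps $R_\theta \circ Df_q$. The paper's key technical idea (its first appendix lemma) is to choose $\psi(r)$ with a logarithmic-type transition so that $|r\,\psi'(r)| < \ep$ everywhere; this is exactly what forces $Dh_x$ to be uniformly $\ep$-close to the pure rotation $R_{\psi(\|x\|)}$, and there is no slack here---a generic bump will not do.

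Second, the globalization step ``any remaining orbit enters and leaves $U$ only a bounded number of times in a window of $n$ steps, so cumulative distortion is a constant factor absorbed by enlarging $n$'' is not sound: orbits can dwell near the fixed point $q$ for arbitrarily many iterates, and each iterate inside the support of $h$ contributes a rotation of angle up to $a$, so the distortion is not controlled just by the number of entries into $U$. The paper sidesteps interpolation altogether by defining the cone field $C_g(x) = \{v : \|Dg^n_x v\| \ge \beta_n(x)\|v\|\}$ intrinsically from $Dg^n$ and $\beta$, and verifying the $n$-step contraction $Dg^n(C_g(x)) \subset C_g(g^n(x))$ pointwise. For orbit segments touching the support of $h$, the verification (its Lemma \ref{lemma-gspin}) needs that all the maps $Dg^n_{g^k(x)}$ along the segment are $\delta$-close to $(R_\theta \circ Df_q)^n$ for a \emph{single} $\theta \in [0,a]$, which is where the Lipschitz-type estimate $\|Dh_x - Dh_y\| < (2 + \|x\|/\|y\|)\ep$ from Lemma \ref{lemma-hspin} enters---again a consequence of the special profile of $\psi$. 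Your spectral continuity observation about the path $\theta \mapsto R_\theta \circ Df_q$ is correct and useful, but by itself it does not give uniform cone contraction over long excursions near $q$.
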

\begin{remark}
    Analogous results hold when the fixed point is replaced by a periodic point
    and when the domination is assumed to hold only on an invariant closed
    subset, instead of all of $M$.
    Also, if $f$ has more than one dominated splitting, as is the case for a
    partially hyperbolic diffeomorphism, then the
    deformation $g$ may taken as the same for each splitting.
    These properties can be seen from the proof of the lemma.
\end{remark}
In this specific setting,
choose the plane $P$ as the span of $X_2$ and $Z_3$ at a
fixed point $q$.
Since $\lam_2 \lam_3^2 > \lam_3$, there is an angle $a>0$
such that the map $R_a \circ Df_q$ when restricted to $P$
has two eigenvalues{:}
one slightly greater than one and the other greater than
$\lam_3$.


Applying the lemma, there is a deformation $g$ of $f$ 
and an iterate $n \ge 1$ such
that
\begin{itemize}
    \itemsep0.75em
    \item
    $q$ is a hyperbolic fixed point for $g$ with an unstable
    subspace equal to \\ $\langle X_2, X_3, Y_3, Z_3 \rangle$,
    \item
    $\|Dg^n(v^s)\| < (\lam_1 + \ep)^n$ for unit vectors $v^s \in \Es_g$,
    \item
    $(\lam_2 - \ep)^n < \|Dg^n(v^c)\| < (1 + \ep)^n$ for unit vectors $v^c \in \Ec_g$, and
    \item
    $(\lam_3 - \ep)^n < \|Dg^n(v^u)\|$ for unit vectors $v^u \in \Eu_g$.
\end{itemize}
If $\ep$ is sufficiently small, then $g$ is center bunched.

By the ``moreover'' part of the lemma,
we may take a sequence of diffeomorphisms
$g_k$ such that, except at the fixed point $q$, the splittings
$
    \Eu_{g_k} \oplus \Ec_{g_k} \oplus \Es_{g_k}
$
converge to the splitting for $f$ as $k \to \infty$.
If each $g_k$ was dynamically coherent, then at a point $x  \ne  q$
there would be a sequence of submanifolds tangent to $\Ecu_{g_k}$
converging to a submanifold tangent to $\Ecu_f$.
Since no such submanifold exists for $f$ at $x$, this is a contradiction.
Therefore, we may assume $g$ is not dynamically coherent.
By the same argument, no diffeomorphism $C^1$ close to $g$
is dynamically coherent.  Since stable accessibility is $C^1$-dense
\cite{dolgopyat2003},
by perturbing $g$, one can find an open family of accessible examples as desired.

\bigskip


Using Lemma \ref{spinning-lemma}, we can also give a simple direct proof of the following.

\begin{thm}
    Any volume-preserving 
    Anosov diffeomorphism with a partially hyperbolic splitting may be deformed
    into a volume-preserving partially hyperbolic diffeomorphism which is not Anosov.
\end{thm}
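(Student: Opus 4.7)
The plan is to destroy the Anosov property at a single periodic point of $f$ by a local rotation, applying Lemma \ref{spinning-lemma} in a $2$-plane spanned by a center eigenvector and an unstable eigenvector so that the resulting derivative acquires an eigenvalue of modulus $1$, while partial hyperbolicity is preserved by the lemma.

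Since both the partially hyperbolic splitting $E^s \oplus E^c \oplus E^u$ and the Anosov stable/unstable splitting of $f$ are dominated, they nest compatibly, and $E^c$ lies entirely inside one of the Anosov bundles. After replacing $f$ by $f^{-1}$ if necessary, assume $E^c$ is uniformly contracting. By density of periodic points and the periodic-point version of the lemma given by its remark, pick a periodic point $q$ of period $k$, passing to a further iterate so that $Df^k_q$ admits real positive eigenvalues $\mu_c < 1$ on $E^c(q)$ and $\mu_u > 1$ on $E^u(q)$ with $\mu_c \mu_u > 1$. The existence of such a pair uses volume-preservation: $\prod_i \mu_i(Df^k_q) = \pm 1$, together with $\prod_s \mu_s < 1$ and $\prod_c \mu_c < 1$, forces $\prod_u \mu_u > 1/\prod_c \mu_c$, and a geometric-mean argument on maximal eigenvalues (switching to $f^{-1}$ in the opposite dimensional case) then yields $\mu_c \mu_u > 1$.

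Let $v^c, v^u$ be the corresponding eigenvectors and $P = \langle v^c, v^u \rangle$, which is $Df^k_q$-invariant. The eigenvalues of $R_\theta \circ Df^k_q|_P$ satisfy $\lambda_+(\theta)\lambda_-(\theta) = \mu_c \mu_u$: $\lambda_-$ increases from $\mu_c$ and $\lambda_+$ decreases from $\mu_u$ until they coalesce at $\sqrt{\mu_c \mu_u} > 1$ at the critical angle $\theta_c$. Choose $a \in (0, \theta_c)$ with $\lambda_-(a) = 1$, so that $\lambda_+(a) = \mu_c \mu_u > 1$. For every $\theta \in [0,a]$ the two eigenvalues lie in $[\mu_c, 1] \cup [\mu_c \mu_u, \mu_u]$, and thus avoid both the closed annulus $[\alpha_1, \beta_1]$ associated to the dominated splitting $E^s \oplus (E^c \oplus E^u)$ (taking $\beta_1 < \mu_c$) and the annulus $[\alpha_2, \beta_2]$ for $(E^s \oplus E^c) \oplus E^u$ (taking $1 < \alpha_2 < \beta_2 < \mu_c \mu_u$). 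In an adapted metric these widened bounds remain valid globally as dominated bounds for $f$, since $\|Df v\| < 1 < \alpha_2$ for unit $v \in E^s \oplus E^c$ and $\|Df v\| > \beta_2$ for unit $v \in E^u$. The hypothesis of the spinning lemma is therefore satisfied for both dominated splittings of $f$.

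Applying Lemma \ref{spinning-lemma}, together with the remark that a single deformation $g$ can handle both dominated splittings simultaneously and preserve the volume form, yields a volume-preserving, partially hyperbolic $g$ with $Dg^k_q = R_a \circ Df^k_q$ and $g = f$ away from a neighborhood of the orbit of $q$. Since $Dg^k_q$ has an eigenvalue of modulus $1$, the periodic point $q$ is non-hyperbolic for $g$, so $g$ is not Anosov. The main technical obstacle is the eigenvalue-selection step: ensuring some periodic point of $f$ admits a pair $(\mu_c, \mu_u)$ of real positive eigenvalues with $\mu_c \mu_u > 1$. This combines volume-preservation with a dimension comparison, and possibly the symmetric application of the argument to $f^{-1}$, which is the delicate point of the proof.
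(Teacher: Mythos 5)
Your plan — a single rotation at a periodic point to create a modulus-one eigenvalue — matches the paper's approach only in the case where the center eigenvalues at the chosen periodic point straddle~$1$. There are several genuine gaps in your argument for the remaining case, and your proof does not reduce to the paper's.

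First, the claim that $\Ec$ lies entirely inside one of the Anosov bundles is false. The two dominated splittings both coarsen the finest invariant splitting, but they need not be comparable: e.g.\ for a linear Anosov map on $\bbT^4$ with eigenvalues $0.01, 0.5, 2, 100$, one may take $\Ec$ to be the span of the eigenspaces for $0.5$ and $2$, which straddles the Anosov splitting. (This straddling case is in fact the \emph{easy} case: at any periodic point the center eigenvalues straddle $1$, so one rotation inside $\Ec$ already produces a non-hyperbolic periodic point — this is precisely what the paper does when $|\lam_j^c|<1<|\lam_{j+1}^c|$.)

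Second, ``passing to a further iterate'' does not make eigenvalues real: $\lam^n$ is complex for generic $n$ whenever $\lam$ is. The paper instead spends a preliminary step of the proof using Lemma~\ref{spinning-lemma} itself to rotate complex conjugate pairs into real ones and to kill Jordan blocks, before the eigenvalue bookkeeping begins. Your argument needs something like this.

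Third, and most seriously, your key eigenvalue-selection step fails: it is not true that some center eigenvalue $\mu_c<1$ and some unstable eigenvalue $\mu_u$ satisfy $\mu_c\mu_u>1$. The volume constraint together with $\prod_s<1$ gives $\prod_c\prod_u>1$, hence $\mu_c^\ell\mu_u^m>1$ for the maxima, but if $\ell<m$ this does \emph{not} imply $\mu_c\mu_u>1$. A concrete obstruction: stable eigenvalue $\approx 0.4$, one center eigenvalue $0.5$, four unstable eigenvalues $1.5$ (adjusting to make the product exactly one). Here every product $\mu_c\mu_j^u = 0.75 < 1$, so no single center--unstable rotation ever passes an eigenvalue through modulus one while respecting the domination annuli. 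Switching to $f^{-1}$ does not help, since the condition $\mu_c\mu_u>1$ is invariant under taking inverses. The paper sidesteps this by abandoning the modulus-one goal in this case: rather than one rotation, it performs $m$ successive rotations to redistribute the product $|\lam_\ell^c\lam_1^u\cdots\lam_m^u|>1$ across $m+1$ eigenvalues all of modulus $>1$, producing a hyperbolic fixed point whose unstable index differs from that of the ambient Anosov splitting — which is what rules out Anosov, not non-hyperbolicity of $q$. Your proof needs this second mechanism (or some replacement) for the case $\ell<m$.
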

\begin{proof}
    Let $f$ be the Anosov diffeomorphism and assume $f$ has a fixed point $q$.
    (If $f$ has no fixed points, a similar proof will work for a periodic
    orbit.)
    If $Df_q$ has non-real eigenvalues, then there is an invariant plane 
    $P \subset \TqM$ such that $Df_q|_P$ has complex conjugate eigenvalues
    $\lam  \ne  \bar \lam$.  For each $\theta \in \bbR$, $R_\theta \circ Df_q|P$
    has two eigenvalues whose product is $|\lam|^2$.
    If $R_\theta \circ Df_q|P$ has non-real eigenvalues,
    they must have the same modulus as $\lam$.
    Further, for some $a > 0$ the eigenvalues become real.
    Applying Lemma \ref{spinning-lemma}, replace $f$ by a diffeomorphism 
    such that $Df_q$ has real
    eigenvalues on $P$.
    By induction, we may assume that all eigenvalues of $Df_q$ are real.

    Suppose now that $Df_q$ is not diagonalizable.
    Then, there is a plane $P$ such that, with respect to some basis,
    $R_\theta|_P$ and $Df_q|_P$ are given respectively by
    \[
            \begin{mattwo}
            \cos \theta & -\sin \theta \\
            \sin \theta & \cos \theta  \end{mattwo}
        \qandq
            \begin{mattwo}
            \lam & b \\
            0   & \lam   \end{mattwo}
          \]
    where $\lam,b \in \bbR$ and $b  \ne  0$.
    Then, the trace of $R_\theta \circ Df_q|_P$ is given by
    $2 \lam \cos \theta + b \sin \theta$
    and from this one sees that
    there is an arbitrary small $\theta$ (possibly negative)
    such that $R_\theta \circ Df_q|_P$
    has distinct real eigenvalues. 
    Thus, by Lemma \ref{spinning-lemma} and induction, one may assume
    that $Df_q$ is diagonalizable with real eigenvalues.

    Suppose the eigenvalues for $Df_q$ are
    $
        \lam_1^s, \cdots, \lam_k^s,
        \ \lam_1^c, \cdots, \lam_\ell^c,
        \ \lam_1^u, \cdots, \lam_m^u
    $
    in order of increasing modulus
    and with superscripts denoting the bundle in the partially hyperbolic
    splitting to which they are associated.
    If $|\lam_j^c| < 1 < |\lam_{j+1}^c|$ for some $j$,
    Lemma \ref{spinning-lemma} may be applied to the span of the two corresponding
    eigenvectors to produce a diffeomorphism $g$
    where $Dg_q$ has an eigenvalue of modulus one.
    Therefore, we may assume either $|\lam_1^c| > 1$
    or $|\lam_\ell^c| < 1$.
    Without loss of generality, assume the latter.
    Since the product of all of the eigenvalues is equal to one,
    it holds that
    $|\lam_\ell^c \lam_1^u \cdots \lam_m^u| > 1$
    and there is $\mu > 1$ such that
    \[    
        \mu < |\lam_1^u| \qandq \mu^{-m} |\lam_\ell^c \lam_1^u \cdots \lam_m^u| > \mu^m.
    \]
    Now, by applying Lemma \ref{spinning-lemma}
    using the plane associated to 
    $\lam_\ell^c$ and $\lam_i^u$,
    those two eigenvalues may be replaced eigenvalues
    of modulus $|\lam_\ell^c \lam_1^u| \mu$ and $\mu$ respectively.
    By applying similar rotations to replace, in turn,
    each eigenvalue
    $\lam_2^u, \ldots, \lam_m^u$ with $\pm \mu$,
    one produces a new diffeomorphism which has a hyperbolic fixed point
    at $q$ with an $(m+1)$-dimensional unstable direction.
          \end{proof}

\appendix
\section*{Appendix}

We give several technical lemmas before proving Lemma \ref{spinning-lemma}.

\begin{lemma}
    For any $a, \ep>0$ there is a smooth decreasing function
    $\psi:[0,\infty) \to [0,a]$ such that
    \begin{itemize}
        \item $\psi(t) = a$ for $t$ in a neighbourhood of zero,

        \item $\psi(t) = 0$ for $t > \ep$, and

        \item $|t \cdot \psi'(t)| < \ep$ for all $t$.
    \end{itemize}  \end{lemma}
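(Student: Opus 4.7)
The plan is to reduce the construction to a standard smooth bump function via a logarithmic change of variables. Setting $\tilde\psi(s) := \psi(e^s)$ for $s \in \bbR$, the chain rule gives $t \cdot \psi'(t) = \tilde\psi'(\log t)$, so the unusual constraint $|t \cdot \psi'(t)| < \ep$ becomes simply a $C^1$-bound $|\tilde\psi'(s)| < \ep$ in the new variable. The problem thus turns into constructing a smooth decreasing $\tilde\psi:\bbR \to [0,a]$ with $\tilde\psi \equiv a$ near $-\infty$, $\tilde\psi \equiv 0$ on $[\log\ep, \infty)$, and $|\tilde\psi'| < \ep$ everywhere.

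To build such $\tilde\psi$, pick any $s_0 < \log\ep - a/\ep$ so that the transition interval $[s_0, \log\ep]$ has length strictly greater than $a/\ep$. A standard mollification of the piecewise-linear function that equals $a$ on $(-\infty, s_0]$, decreases with constant slope strictly between $-\ep$ and $0$ across $[s_0, \log\ep]$, and equals $0$ on $[\log\ep, \infty)$ yields the required smooth decreasing function with $|\tilde\psi'| < \ep$; one may arrange the mollifier to be narrow enough that the support conditions on the ends are preserved (or simply start with a slightly shorter plateau).

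Pulling back, define $\psi(t) := \tilde\psi(\log t)$ for $t > 0$ and $\psi(0) := a$. Since $\tilde\psi$ is constant on $(-\infty, s_0]$, the function $\psi$ equals $a$ on $[0, e^{s_0}]$, which in particular gives smoothness at $t = 0$ (all derivatives from the right vanish) and provides a neighbourhood of zero on which $\psi \equiv a$. Similarly $\psi \equiv 0$ on $[\ep, \infty)$, and monotonicity of $\psi$ follows from that of $\tilde\psi$ because $\log$ is increasing. The derivative bound is $|t \cdot \psi'(t)| = |\tilde\psi'(\log t)| < \ep$ for $t > 0$, and equals $0$ at $t = 0$.

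There is no genuine obstacle: the only subtle point is smoothness at $t=0$, but this is automatic because $\psi$ is literally constant on a neighbourhood of the origin. The content of the lemma is essentially the observation that logarithmic rescaling converts the scale-invariant bound $|t \psi'(t)| < \ep$ into a uniform one, so that arbitrarily long transition regions in the $s$-variable correspond to transitions on arbitrarily small intervals in $t$.
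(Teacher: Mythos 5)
Your proof is correct and amounts to the same construction as the paper's, viewed through the lens of a change of variables. The paper writes down the transition region directly as $\psi_0(t) = -\tfrac{\ep}{2}\log(t) + c$ on $[b,\ep/2]$ and smooths; under your substitution $s = \log t$, this is exactly a linear ramp with slope $-\ep/2$, which is a particular choice of the "constant slope strictly between $-\ep$ and $0$" in your argument. Your reframing makes the reason the logarithm appears transparent, but the underlying function is the same.
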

\begin{proof}
    Define
    \[    
        \psi_0(t) = 
        \left\{
            \begin{array}
            {l l}
            a,                        & \text{for } t \in [0,b] \\
            -\frac{\ep}{2} \log(t) + c,  & \text{for } t \in [b,\ep/2] \\
            0,                        & \text{for } t \in [\ep/2,\infty)  \end{array}
        \right.
    \]
    where $b$ and $c$ are chosen so that $\psi_0$ is well-defined
    and continuous.
    By smoothing out $\psi_0$ near $b$ and $\ep/2$, one may define a function
    $\psi$ as desired.
\end{proof}
For the next two lemmas,
let $P$ be a two-dimensional subspace of $\bbR^d$ and define
$R_\theta$ to be the rotation by angle $\theta$ in $P$.

\begin{lemma} \label{lemma-hspin}
    For any $a,\ep>0$ there is a smooth function $h:\bbR^d \to \bbR^d$
    such that at the origin the derivative $Dh_0$ equals $R_a$,
    and for all $x \in \bbR^d$
    \begin{itemize}
        \item there is $\theta \in [0,a]$ such that
        $\|Dh_x - R_\theta\| < \ep$,

        \item if $y \in \bbR^d \setminus \{0\}$ then
        $\|Dh_x - Dh_y\| < (2 + \frac{\|x\|}{\|y\|}) \ep$, and

        \item if $\|x\|>\ep$ then $h(x) = x$.
    \end{itemize}
\end{lemma}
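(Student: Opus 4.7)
The plan is to set $h(x) := R_{\psi(\|x\|)}(x)$, where $\psi$ is the radial cutoff produced by the preceding lemma. Because $\psi \equiv a$ on a full neighborhood of zero, $h$ coincides there with the linear map $R_a$, which immediately gives smoothness at $0$ together with $Dh_0 = R_a$. Because $\psi \equiv 0$ on $[\epsilon,\infty)$, the third bullet $h(x) = x$ for $\|x\| > \epsilon$ is automatic, and smoothness elsewhere is clear since $\psi$ and $\|\cdot\|$ are both smooth away from the origin.

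For the differential bounds, I would compute $Dh_x$ at $x \neq 0$ by the chain rule. Writing $r = \|x\|$, $\theta(x) = \psi(r)$, and $R'_\theta := \partial_\theta R_\theta$, one obtains
\[
    Dh_x(v) \;=\; R_{\theta(x)}(v) \;+\; \psi'(r)\,\frac{\langle x, v\rangle}{r}\,R'_{\theta(x)}(x).
\]
Since $R'_\theta$ acts as a rotation on $P$ and vanishes on $P^\perp$, we have $\|R'_{\theta(x)}(x)\| \le \|x\|$, so the second summand has operator norm at most $|r\psi'(r)| < \epsilon$. This yields $\|Dh_x - R_{\theta(x)}\| < \epsilon$ with $\theta(x) \in [0,a]$, which is the first bullet.

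For the second bullet I would pass through the two approximating rotations:
\[
    \|Dh_x - Dh_y\| \;\le\; \|Dh_x - R_{\theta(x)}\| + \|R_{\theta(x)} - R_{\theta(y)}\| + \|R_{\theta(y)} - Dh_y\|,
\]
where the first and third terms are each less than $\epsilon$. For the middle term, use $\|R_\alpha - R_\beta\| \le |\alpha - \beta|$ together with the integral estimate built from $|\psi'(t)| \le \epsilon/t$: assuming without loss of generality $\|x\| \ge \|y\|$,
\[
    |\psi(\|x\|) - \psi(\|y\|)| \;\le\; \int_{\|y\|}^{\|x\|}\!\frac{\epsilon}{t}\,dt \;\le\; \epsilon\,\frac{\|x\|-\|y\|}{\|y\|} \;\le\; \epsilon\,\frac{\|x\|}{\|y\|},
\]
using the crude bound $1/t \le 1/\|y\|$ on the interval. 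The opposite ordering is handled by swapping the roles of $x$ and $y$. Summing gives the stated bound $(2 + \|x\|/\|y\|)\epsilon$.

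The one subtlety worth flagging is smoothness of $h$ at the origin: the radial coordinate $\|x\|$ is not smooth at $0$, so a priori $R_{\psi(\|x\|)}(x)$ could fail to be smooth there. This is exactly why the preceding lemma insists on $\psi$ being constantly equal to $a$ on a whole neighborhood of zero; on that neighborhood $h$ is literally the linear map $R_a$, and smoothness is manifest. Beyond this, every bullet reduces to the two key inequalities $|t\psi'(t)| < \epsilon$ and $|\psi'(t)| \le \epsilon/t$ already furnished by $\psi$, so I expect no serious obstacle beyond bookkeeping.
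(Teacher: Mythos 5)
Your proposal matches the paper's proof in every essential respect: same construction $h(x) = R_{\psi(\|x\|)}(x)$, same observation that $h$ agrees with the linear map $R_a$ near the origin and with the identity outside the ball of radius $\ep$, the same chain-rule estimate bounding the correction term by $|r\,\psi'(r)| < \ep$, and the same three-term triangle inequality through the intermediate rotations $R_{\psi(\|x\|)}$ and $R_{\psi(\|y\|)}$ for the second bullet. One small caveat, which is present in the paper's proof as well: ``swapping the roles of $x$ and $y$'' when $\|y\| > \|x\|$ gives $|\psi(\|x\|)-\psi(\|y\|)| < \ep\,\|y\|/\|x\|$, hence $\|Dh_x - Dh_y\| < (2 + \|y\|/\|x\|)\ep$ rather than the stated $(2 + \|x\|/\|y\|)\ep$; so what is really established is the symmetrized bound $(2 + \max(\|x\|/\|y\|,\,\|y\|/\|x\|))\ep$, which is what Lemma~\ref{lemma-gspin} actually uses (with the ratio controlled by $K$), so nothing breaks downstream.
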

\begin{remark}
    In the proof, the norm of a linear map
    $L:\bbR^d \to \bbR^d$ is taken to be $\sup |\ell_{i j}|$
    where $\ell_{i j}$ is the $(i,j)$ entry of the matrix representing
    $L$ with respect to the standard basis.
    It is easy to see that the same results hold for any 
    choice of norm and any Riemannian metric on $\bbR^d$.
\end{remark}
\begin{proof}
    Define $r(x) = \|x\|$ as the usual Euclidean distance from the origin.
    Then define
    $h(x) = R_{\psi(r(x))}(x)$ where $\psi$ is from the previous lemma.
    From this, one can verify the desired properties.
    For instance,
    if $P$ is the span of the first two coordinates of $\bbR^d$
    so that
    \[    
        R_\theta(x_1, x_2, x_3, \ldots, x_d) =
        (x_1 \cos \theta - x_2 \sin \theta,
         \ x_1 \sin \theta + x_2 \cos \theta,\ x_3,\ldots,x_d)
    \]
    then, writing $r(x)$ simply as $r$,
    \[
        \frac{\partial h_1}{\partial x_1} = 
        \cos \psi(r)
        + [x_1 \sin \psi(r) + x_2 \cos \psi(r)]
        \ \frac{d \psi}{d r}
        \ \frac{\partial r}{\partial x_1}.
    \]
    As $|x_1\ \sin \psi(r) + x_2\ \cos \psi(r)|  \le  r$ and
    $|\frac{\partial r}{\partial x_1}|  \le  1$,
    it follows that
    \[
        \left|\frac{\partial h_1}{\partial x_1}
        - \cos \psi(r)
        \right|
        < r \left|\frac{d \psi}{d r} \right| < \ep.
    \]
    Similar inequalities hold for the other partial derivatives,
    showing that the Jacobian of $h$ at a point $x$ is $\ep$-close to the
    linear map $R_\theta$ where $\theta = \psi(r(x))$.
    By the mean value theorem,
    $|\psi(s)-\psi(t)| < \ep \frac{s}{t}$
    for all $s,t \in (0,\infty)$.
    Therefore,
    \begin{align*}
        \|Dh_x - Dh_y\| & \le 
        \|Dh_x - R_{\psi(r(x))}\| + 
        \|R_{\psi(r(x))} - R_{\psi(r(y))}\| + 
        \|R_{\psi(r(y))} - Dh_y \| \\ &< 
        \ep + \ep \frac{\|x\|}{\|y\|} + \ep
    \end{align*}
    for all non-zero $x,y \in \bbR^d$.
          \end{proof}
\begin{lemma} \label{lemma-gspin}
    Let $f:\bbR^d \to \bbR^d$ be a diffeomorphism such that $f(0)=0$.
    Then for $a,\delta>0$ and $n \ge 1$, there is
    a diffeomorphism $g:\bbR^d \to \bbR^d$
    such that
    at the origin $Dg_0$ equals $R_a \circ Df_0$
    and if $x \in \bbR^d$
    and $j \in \{1, \ldots, 2n\}$ are such that
    $g^j(x)  \ne  f^j(x)$ then
    $\|x\| < \delta$ and there is
    $\theta \in [0,a]$
    such that
    $\|Dg^n_y - (R_\theta \circ Df_0)^n\| < \delta$
    for all \mbox{$y \in \{ x, g(x), \ldots, g^n(x) \}$.}
\end{lemma}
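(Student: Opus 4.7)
The plan is to set $g = h \circ f$, where $h$ is obtained from Lemma \ref{lemma-hspin} applied with angle $a$ and a sufficiently small parameter $\ep'>0$ to be fixed below. This immediately gives $Dg_0 = Dh_0 \circ Df_0 = R_a \circ Df_0$, makes $g$ equal to $f$ outside $f\inv(B_{\ep'}(0))$, and, since $Dh$ is then close to a rotation, makes $g$ a diffeomorphism.  Note $g(0) = h(f(0)) = h(0) = 0$, so $0$ is a fixed point of $g$ as well.

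For the $C^0$ assertion $\|x\|<\delta$, I would proceed as follows.  Because $f(0)=0$ and each $f^i$ ($|i|\le 2n$) is continuous at $0$, one can choose $\ep'$ small so that $\|z\|<\ep'$ forces $\|f^i(z)\|<\delta/2$ for all such $i$.  The third bullet of Lemma \ref{lemma-hspin} gives $\|g-f\|_{C^0}<2\ep'$, and a standard telescoping argument (using that $g$ has a bounded Lipschitz constant near $0$) then yields $\|g^i(x)-f^i(x)\|<\delta/2$ for $i\le 2n$, provided the $f$-orbit stays in a fixed bounded region.  Now if $g^j(x)\ne f^j(x)$ for some $j\le 2n$, let $k$ be minimal with $g^k(x)\in f\inv(B_{\ep'}(0))$; by minimality $g^i(x)=f^i(x)$ for $i\le k$, so $f^{k+1}(x)\in B_{\ep'}(0)$, and combining the two estimates yields $\|g^i(x)\|<\delta$ for all $i\le 2n$.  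In particular $\|x\|<\delta$.

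For the derivative estimate I may assume $x\ne 0$ (otherwise the hypothesis is vacuous), fix $y=g^i(x)$ with $0\le i\le n$, and expand
\[
Dg^n_y \;=\; \prod_{k=0}^{n-1} \bigl( Dh_{f(g^k(y))}\circ Df_{g^k(y)} \bigr).
\]
By the first bullet of Lemma \ref{lemma-hspin}, each $Dh_{f(g^k(y))}$ is within $\ep'$ of a rotation $R_{\theta_k}$ with $\theta_k\in[0,a]$.  To pick a single $\theta$ valid for every $y$ and every $k$, I would invoke the second bullet of Lemma \ref{lemma-hspin}.  Since $Dg_0 = R_a\circ Df_0$ is a linear isomorphism, the Lipschitz constants of $g$ and $g\inv$ on $B_\delta$ are bounded in terms of $\|Df_0\|$ and $\|Df_0\inv\|$, so the nonzero orbit points $\{f(g^j(x)) : 0\le j\le 2n-1\}$ have pairwise norm ratios bounded by a constant $M=M(f,n)$.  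Hence $\|Dh_{f(g^j(x))}-Dh_{f(g^0(x))}\|<(2+M)\ep'$ for all such $j$, and setting $\theta:=\theta_0$ makes $\|Dh_{f(g^j(x))}-R_\theta\|<(3+M)\ep'$ uniformly.  Combined with $C^0$-continuity of $Df$ at $0$ (to replace $Df_{g^k(y)}$ by $Df_0$), each factor in the product differs from $R_\theta\circ Df_0$ by $O(\ep')$, and telescoping the $n$ factors costs a fixed factor of $n(\|Df_0\|+O(\ep'))^{n-1}$.  Choosing $\ep'$ small enough (depending on $n$, $\delta$, and $f$) pushes the total error below $\delta$.

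The main obstacle is the uniform choice of $\theta$ valid simultaneously for all $y$ in the partial orbit $\{x,g(x),\ldots,g^n(x)\}$ and all $k$ in the corresponding products: a priori the angles $\psi(r(f(g^k(y))))$ can all differ, and two orbit points can in principle have very different norms, which would spoil the argument.  This is precisely why the second bullet of Lemma \ref{lemma-hspin} is formulated with an explicit ratio $\|x\|/\|y\|$, and why the preceding step, extracting the uniform ratio bound $M$ from the dynamics of $g$ near the fixed point $0$, is essential.
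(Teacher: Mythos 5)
Your proposal is correct and follows essentially the same route as the paper: both set $g = h\circ f$ with $h$ from Lemma~\ref{lemma-hspin} for a sufficiently small parameter, bound $\|x\|$ by tracing back to the first time the orbit enters $f\inv(B_{\ep'})$, and obtain a single $\theta$ by using the second bullet of Lemma~\ref{lemma-hspin} together with a uniform bound on how much a bounded number of iterates can change the norm near the fixed point. The paper phrases the quantitative step as ``consecutive'' closeness of the $H_k$'s (the factor $K$ in $(2+K)\ep<\eta$ controls only the one-step norm ratio, and a separate continuity lemma propagates this through $n$ compositions), whereas you bound all $2n$ orbit norms against $\|f(x)\|$ at once via a constant $M$ and then telescope by hand — a cosmetic rather than substantive difference.
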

\begin{proof}
    By continuity, there is a constant $\eta>0$ such that
    any linear maps $F_k, H_k:\bbR^d \to \bbR^d$
    and values $\theta \in [0,a]$ and $j \in \{1, \ldots, 2n\}$ 
    which satisfy
    \begin{itemize}
        \item $\| F_k - Df_0 \| < \eta$ for all $k \in \{1, \ldots, 2n\}$,

        \item $\| H_{k+1} - H_{k} \| < \eta$ for all $k \in \{1, \ldots 2n-1\}$, and

        \item $\| H_j - R_\theta \| < \eta$,
    \end{itemize}
    must also satisfy
    $\| (H_{k+n-1} \circ F_{k+n-1} \circ \cdots \circ H_k \circ F_k) - (R_\theta \circ Df_0)^n\| < \delta$
    for all $k \in \{1, \ldots, n\}$.

    Let $U$ be a neighbourhood of the origin such that
    $\| Df_x - Df_0 \| < \eta$
    and $\|x\| < \delta$
    for all $x \in U$.
    Define $K > 1$ such that
    $K \inv \|y\|  \le  \|R_\theta(f(y))\|  \le  K \|y\|$
    for all $\theta \in [0,a]$ and $y \in U$.
    Then, there is $\ep>0$ such that $(2+K)\ep < \eta$ and such that $U$ includes
    the ball of radius $\ep K^{2n}$ centered at the origin.
    With this $\ep$, take $h$ as in the Lemma \ref{lemma-hspin}
    and define $g = h \circ f$.
\end{proof}
Note that the diffeomorphism $h$ in Lemma \ref{lemma-hspin} preserves
the standard volume form on $\bbR^d$.  Therefore, if $f$ is volume preserving
in Lemma \ref{lemma-gspin}, then so is $g = h \circ f$.

\begin{proof}
    [Proof of Lemma \ref{spinning-lemma}]

    By a result of Moser \cite{moser1965}, there is a neighbourhood $U$ of $q$
    and a volume preserving embedding $\phi:U \to \bbR^d$
    such that $q$ is mapped to the origin.
    By abuse of notation, we simply assume that $U$ \emph{is}
    a subset of $\bbR^d$
    and identify the tangent space $T_x M$ with $\bbR^d$ for all $x \in U$.
    We further assume, by changing the embedding if necessary,
    that
    $R_\theta$
    for $\theta \in [0,a]$
    is a rotation with respect to the standard metric on $\bbR^d$.
    Without loss of generality, assume the function $\beta$
    in the statement of the lemma
    is constant in a neighbourhood of $q$.
    
    By considering the spectral radius, one can show that
    for every $\theta \in [0,a]$ there is $n  \ge  1$
    such that the cone
    \[    
        C_\theta =
        \{ v \in \bbR^d : \|(R_\theta \circ Df_q)^n v\|  \ge  \beta(q)^n \|v\| \}
    \]
    satisfies the property that $(R_\theta \circ Df_q)^n(C_\theta)$ is compactly
    contained in $C_\theta$.
    If this inclusion holds for some $n$ and $\theta$, then
    it also holds for the same $n$ and all nearby $\theta$.
    Therefore, as $[0,a]$ is compact, 
    a single value of $n$ may be used.

    Define the cone field $C_f$ by
    \[
        C_f(x) = \{ v \in T_x M : \| Df^n_x v \|  \ge  \beta_n(x) \|v\| \}
    \]
    where $\beta_n$ is the cocycle
    $\beta_n(x) := \beta(f^{n-1}(x)) \cdots \beta(f(x)) \beta(x)$.
    By the properties of cone fields and dominated splittings,
    if $n$ is sufficiently large,
    then $Df^n(C_f)$ is compactly contained in $C_f$ \cite{pujsam2000}.

    For an arbitrary linear map $L:\bbR^d \to \bbR^d$,
    define a cone
    $
        C_L = \{ v \in \bbR^d : \|L v\|  \ge  \beta(q)^n \|v\| \}.
          $
    Suppose $\delta > 0$, $\theta \in [0,a]$,
    and that $L_1$ and $L_2$ are two linear maps
    which satisfy 
    \[
        \|L_i - (R_\theta \circ Df_q)^n\| < \delta.
    \]
    Since $(R_\theta \circ Df_q)^n(C_\theta)$ is compactly
    contained in $C_\theta$,
    a continuity argument shows that if $\delta$ is sufficiently small,
    then 
    $L_1(C_{L_1}) \subset C_{L_2}$.
    Moreover, $\delta$ may be chosen independently of $\theta \in [0,a]$.
    Using $a$, $\delta$, and $n$, define $g$ as in Lemma \ref{lemma-gspin}
    and define a cone field $C_g$ for $g$ by
    \[    
        C_g(x) = \{ v \in T_x M : \| Dg^n_x v \|  \ge  \beta_n(x) \|v\| \}.
    \]
    Now consider $x \in M$.
    If $f^k(x) = g^k(x)$ for all $k \in \{1, \cdots, 2N\}$,
    then
    \[
        Dg^n(C_g(x)) = Df^n(C_f(x)) \subset C_f(f^n(x)) = C_g(g^n(x)).
    \]
    Otherwise, define $L_1 = Dg^n_{x}$
    and $L_2 = Dg^n_y$ where $y={g^n(x)}$.
    Then Lemma \ref{lemma-gspin} implies that
    $\|L_i - (R_\theta \circ Df_q)^n\| < \delta$
    and therefore 
    \[
        Dg^n(C_g(x)) = L_1(C_{L_1}) \subset C_{L_2} = C_g(g^n(x)).
    \]
    This is enough to establish a dominated splitting
    $TM = E'_g \oplus E''_g$.

    For any $\ep > 0$,
    one can show that there is $N_\ep$ such that if
    $f^k(x) = g^k(x)$
    for all $k \in \{1, \ldots, N_\ep\}$
    then the splittings
    $E'_f(x) \oplus E''_f(x)$ and
    $E'_g(x) \oplus E''_g(x)$
    are $\ep$-close.
    This can be used to prove the ``moreover''
    part of the lemma.
\end{proof}

\bigskip

\acknowledgement
The author would like to thank K.~Burns and A.~Wilkinson for reading through
an early draft of this paper.
This research was partially funded by
the Australian Research Council Grant DP$120104514$.


\bibliographystyle{plain}
\bibliography{dynamics}

\end{document}